\documentclass[12pt]{amsart}
\usepackage{amscd}
\usepackage{amssymb}
\usepackage{a4wide}
\usepackage{amstext}
\usepackage{amsthm}
\usepackage{mathrsfs}
\usepackage{relsize}

\usepackage{tikz}
\usepackage{float}
\DeclareGraphicsExtensions{.pdf,.png,.jpg}
\usetikzlibrary{graphs}
\usetikzlibrary{arrows,positioning,calc}

\usepackage[final]{hyperref}
\hypersetup{unicode= false, colorlinks=true, linkcolor=blue,
anchorcolor=blue, citecolor=green, filecolor=red, menucolor=blue,
pagecolor=blue, urlcolor=blue} \linespread{1.2}
\numberwithin{equation}{section}

\newcommand{\Z}{\mathbb{Z}}

\renewcommand{\phi}{\varphi}

\newcommand{\vep}{\varepsilon}
\newcommand{\co}{\mathbb{C}}
\newcommand{\N}{\mathbb{N}}

\newtheorem{Thm}{Theorem}[section]
\newtheorem{theorem}[Thm]{Theorem}

\newtheorem{proposition}[Thm]{Proposition}

\newtheorem{remark}[Thm]{Remark}
\newtheorem{example}[Thm]{Example}

\textwidth=16.5cm
\oddsidemargin=0cm
\textheight=610pt

\begin{document}
\sloppy

\title[Hypercyclic shifts on lattice graphs]{Hypercyclic shifts on lattice graphs}

\author[A. Baranov, A. Lishanskii, D. Papathanasiou]{Anton Baranov, Andrei Lishanskii, Dimitris Papathanasiou}

\address{Anton Baranov\\
	Department of Mathematics and Mechanics, St. Petersburg State University, 28, Universitetskii prosp., St. Petersburg, 198504, Russia}
\email{anton.d.baranov@gmail.com}

\address{Andrei Lishanskii\\
St. Petersburg Department of Steklov Mathematical Institute of Russian Academy of Sciences; 
Leonard Euler International Institute at St. Petersburg, 
27, Fontanka, St. Petersburg, 191023, Russia }
	\email{lishanskiyaa@gmail.com}
	
	\address{Dimitris Papathanasiou\\
	Sabanci University, Orta Mahalle, Universitesi Cd. No: 27, 34956 Tuzla/Istanbul, Turkey }
	\email{d.papathanasiou@sabanciuniv.edu}

\thanks{The work of A. Baranov in Sections 3 and 4 is supported by the Russian Science Foundation grant
24-11-00087. The work of A. Lishanskii is supported by the Ministry of Science and Higher Education of the
Russian Federation, agreement 075-15-2022-289.}

\maketitle

\begin{abstract}
Recently K.-G.~Grosse-Erdmann and D.~Papathanasiou described hypercyclic shifts in weighted spaces
on directed trees. In this note we discuss several simple examples of graphs which are not trees, e.g., the lattice graphs,
and study hypercyclicity of the corresponding backward shifts.
\end{abstract}

\section{Introduction}

Let $G= (V,E)$ be a connected directed graph consisting of a countable set 
$V$ of vertices where $E \subset V\times V$ is its set of edges. 
For $v,u\in V$ we write $v \to u$ if $(v, u) \in E$. 
Given a vertex $v\in V$ we denote by $Chi(v)$ the set of its ``children'':
$$
Chi(v) =\{u\in V:\ v\to u \}.
$$
More generally, for $n\ge 1$, we put $Chi^n(v) = Chi(Chi(. . . (Chi(v))))$ ($n$ times). 
Analogously, we denote by $Par(v)$ the set of those $u$ for which $v\in Chi(u)$ (``parents'' of $v$), and
$Par^n(v) = Par(Par(. . . (Par(v))))$ $n$ times. 
In what follows we will assume  that each vertex in the graph $G$ has a finite degree, that is, the set
$Chi(v)$ is finite for any $v\in V$.

For any graph $G$ and function $f : V \to \co$ we define the (unweighted) backward shift 
$$
   (B f) (v) = \sum\limits_{u \in Chi (v)} f(u), \qquad v \in V.
$$

Recall that a continuous linear operator $T$ on a separable Banach space 
$X$  is said to be  \textit{hypercyclic} if there exists $x \in X$ such that the set 
$\{T^n x:\, n\in\mathbb{N}_0\}$ is dense in $X$
(here $\mathbb{N}_0 = \{0,1,2, \dots\}$). The operator $T$
is said to be {\it weakly mixing} if $T \oplus T$ is hypercyclic on $X \oplus X$.
Finally,  $T$ is said to be {\it mixing} if for any nonempty open sets $U$ and $V$
there exists $N$ such that $T^n(U)\cap V \ne \emptyset$ for $n\ge N$.
It is well known that mixing implies weak mixing and weak mixing implies hypercyclicity. 
For the theory of hypercyclic operators see \cite{bm, gp}.

Weighted shifts are among the most well-known examples of hypercyclic operators.
Hypercyclic shifts on weighted spaces $\ell^p(\N, \mu)$ and $\ell^p(\Z, \mu)$ of one-sided and two-sided sequences
were described by H.N.~Salas \cite{sal}. Note that $\N$ and $\Z$ can be considered as simplest examples of a rooted
and unrooted tree respectively. 

We will consider the hypercyclicity properties of the backward shift on the standard spaces $\ell^p(V, \mu)$
and $c_0(V, \mu)$ on $V$. Let $\mu = (\mu_v)_{v\in V}$ be a family of non-zero (real or complex, but not necessarily positive) numbers,
called a weight. For $1\le p<\infty$ put 
$$
\ell^p(V, \mu) = \{f:V\to\co:\ \|f\|^p_{\ell^p(V, \mu)} = \sum_{v\in V} |f(v)\mu_v|^p <\infty\}.
$$
The space $c_0(V, \mu)$  is defined as
$$
c_0(V, \mu) = \{f:V\to\co:\ \forall \vep>0 \ \exists F\subset V, \text{finite, such that}\ 
|f(v) \mu_v| <\vep, \ v\in V\setminus F\};
$$
it is equipped with the sup-norm $ \|f\|_{c_0(V, \mu)} = \sup_{v\in V} |f(v)\mu_v|$.

For the case of directed trees (both rooted or not) without leaves, a solution of the hypercyclicity problem was obtained by 
K.-G.~Grosse-Erdmann and the third author \cite{grpap}. Let us formulate their result for a rooted tree in the case
$1<p<\infty$.
\medskip
\\
{\bf Theorem} \cite[Theorem 4.3]{grpap}.
{\it Let $G=(E,V)$ be a rooted directed tree, 
$1<p<\infty$, $1/p+1/q=1$. The operator $B$ on $\ell^p(V, \mu)$ is hypercyclic if and only if 
it is weakly mixing and if and only if 
there is an increasing sequence $(n_k)$  of positive integers such that, for each $v\in V$,}
$$
\sum_{u\in Chi^{n_k}(v)} |\mu_u|^{-q}\to \infty, \qquad k\to \infty.
$$

Previously hypercyclicity of shifts on directed trees was studied by R.A. Mart\'inez-Avenda\~{n}o \cite{mar},
while in a recent preprint \cite{gr1} chaotic weighted shifts on trees are characterized.
For further results about shifts on trees see \cite{aa, lp}.

It should be mentioned that there exist parallel and essentially equivalent theories of weighted shifts on graphs:
one can consider unweighted shift in a space (say, $\ell^p$) with a weight or a weighted shift (with weights ascribed to the edges)
in an unweighted space. In \cite{grpap} both of these viewpoints are considered; here we consider unweighted shifts only. 

The aim of this note is to study  
hypercyclicity of the backward shift for some concrete simple examples of graphs which are not trees. 
Let us mention the following general question: {\it to describe  those directed graphs for which there exist a measure $\mu$ 
on $V$ such that $B$ is hypercyclic on  $\ell^p(V, \mu)$.} One of the trivial obstacles
is (as noted already in \cite{grpap}) the existence of vertices $v\in V$ such that $Chi(v) = \emptyset$. 
For another simple example of a graph which does not carry a hypercyclic weighted shift see Section \ref{ex}.
It seems that existence of cycles also makes it more difficult (but not impossible) to have a hypercyclic weighted shift.
\medskip
\\
\bigskip


\section{Model examples}

One can expect that the next natural class of graphs for which the hypercyclic shifts can be 
described is the class of Cartesian products of trees. Recall that given two graphs $G = (V(G), E(G))$, 
$H =  (V(H), E(H))$
(directed or not), the vertex set of $G\times H$ is the Cartesian product $V(G) \times V(H)$
and two vertices $(u,u')$ and $(v,v')$ are adjacent in $G\times H$ if and only if either
$u = v$ and $(u',v') \in E(H)$, or $u' = v'$ and $(u,v) \in E(G)$.

Even in the class  of Cartesian products of trees hypercyclicity of shifts seems to be a difficult problem.
In this note we consider the simplest cases of directed lattice graphs, i.e.,
Cartesian products $ [1,2, \dots,m]\times \N$, $ [1,2, \dots,m]\times \Z$ and $\N \times \N$, 
where $[1,2, \dots,m]$, $\N$, $\Z$ are considered as directed graphs with natural orientations of the edges.
Namely, for $m\in \N$, consider the graph $G_m = (V_m, E_m)$ such that 
$V_m= (v_{i,j})_{1\le i \le m, j\ge 1} \cong [1,2, \dots,m]\times \N$ and
\begin{equation}
\label{rty}
(v_{i,j}, u) \in E_m \ \Longleftrightarrow \ u = v_{i,j+1} \quad \text{or} \quad u= v_{i+1,j}, \ 1\le i\le m-1.
\end{equation}
Below we show the picture of the graph $G_m$ with $m=3$:
\medskip

\begin{center}
\begin{tikzpicture}[node distance={14mm}, thick, main/.style = {draw, circle}] 
\node (1) {$v_{1,1}$}; 
\node (2) [above of=1] {$v_{2,1}$}; 
\node (3) [above of=2] {$v_{3,1}$}; 
\node (4) [right of=1] {$v_{1,2}$}; 
\node (5) [above of=4] {$v_{2,2}$}; 
\node (6) [above of=5] {$v_{3,2}$}; 
\node (7) [right of=4] {$v_{1,3}$}; 
\node (8) [above of=7] {$v_{2,3}$}; 
\node (9) [above of=8] {$v_{3,3}$}; 
\node (10) [right of=7] {$\dots$}; 
\node (11) [above of=10] {$\dots$}; 
\node (12) [above of=11] {$\dots$}; 
\draw[->] (1) -- (2); 
\draw[->] (2) -- (3); 
\draw[->] (1) -- (4); 
\draw[->] (4) -- (5); 
\draw[->] (5) -- (6); 
\draw[->] (2) -- (5); 
\draw[->] (3) -- (6); 
\draw[->] (4) -- (7); 
\draw[->] (7) -- (10); 
\draw[->] (5) -- (8); 
\draw[->] (8) -- (11); 
\draw[->] (6) -- (9); 
\draw[->] (9) -- (12); 
\draw[->] (7) -- (8); 
\draw[->] (8) -- (9); 
\end{tikzpicture} 
\end{center} 

Analogously, we define the graph $\tilde G_m = (\tilde E_m, \tilde V_m)$ with 
$\tilde V_m= (v_{i,j})_{1\le i \le m, j\in \Z} \cong [1,2, \dots,m]\times \Z$, whose vertices are also given by 
\eqref{rty} but with $j\in\Z$.

Similarly, we consider the lattice graphs $G_\infty$ and $\Tilde G_\infty$  such that $V_\infty \cong \N \times \N$,
$\Tilde V_\infty \cong \Z \times \N$  and all edges are of the form $(v_{i,j}, v_{i,j+1})$ or $(v_{i,j}, v_{i+1,j})$. 
Below we give the picture of the graph $G_\infty$:
 
 \begin{center}
\begin{tikzpicture}[node distance={14mm}, thick, main/.style = {draw, circle}] 
\node (1) {$v_{1,1}$}; 
\node (2) [above of=1] {$v_{2,1}$}; 
\node (3) [above of=2] {$v_{3,1}$}; 
\node (4) [right of=1] {$v_{1,2}$}; 
\node (5) [above of=4] {$v_{2,2}$}; 
\node (6) [above of=5] {$v_{3,2}$}; 
\node (7) [right of=4] {$v_{1,3}$}; 
\node (8) [above of=7] {$v_{2,3}$}; 
\node (9) [above of=8] {$\dots$}; 
\node (10) [right of=7] {$v_{1,4}$}; 
\node (11) [above of=10] {$\dots$}; 
\node (13) [above of=3] {$v_{4,1}$};
\node (14) [above of=13] {$\dots$};
\node (15) [right of=13] {$\dots$}; 
\node (16) [right of=10] {$\dots$}; 
\draw[->] (1) -- (2); 
\draw[->] (2) -- (3); 
\draw[->] (1) -- (4); 
\draw[->] (4) -- (5); 
\draw[->] (5) -- (6); 
\draw[->] (2) -- (5); 
\draw[->] (3) -- (6); 
\draw[->] (4) -- (7); 
\draw[->] (7) -- (10); 
\draw[->] (5) -- (8); 
\draw[->] (8) -- (11); 
\draw[->] (6) -- (9); 
\draw[->] (7) -- (8); 
\draw[->] (8) -- (9); 
\draw[->] (3) -- (13); 
\draw[->] (13) -- (14); 
\draw[->] (10) -- (16); 
\draw[->] (10) -- (11); 
\draw[->] (13) -- (15); 
\draw[->] (6) -- (15); 
\end{tikzpicture} 
\end{center} 

It is obvious that $B$ is bounded on $\ell^p(V,\mu)$, $1 \le p <\infty$, or on $c_0(V,\mu)$ for each of the above lattice graphs
if and only if there exists $C>0$ such that for all admissible $(i,j)$
\begin{equation}
\label{bdd}
|\mu_{v_{i,j}}| \le C \min (|\mu_{v_{i+1,j}}|, |\mu_{v_{i,j+1}}|).
\end{equation}

We start with a hypercyclicity/weak mixing criterion for the graph $G_m$.

\begin{theorem} 
\label{t1}
Let $B$ be the backward shift on $G_m$, $m\in \N$, and let $X$ be any of the spaces 
$\ell^p(V_m,\mu)$, $1 \le p <\infty$, or $c_0(V_m,\mu)$. Assume that $B$ is bounded on $X$. Then 
the following are equivalent\textup:
\smallskip

\textup(i\textup) $B$ is hypercyclic on  $X$\textup;
\smallskip

\textup(ii\textup) $B$ is weakly mixing  on  $X$\textup;
\smallskip

\textup(iii\textup) there exists an increasing sequence $(n_k)$  of positive integers such that 
for any $1\le i\le m$, $j\ge 1$,
\begin{equation}
\label{gop}
n_k^{m-i} |\mu_{v_{i, j + n_k}}| \to 0, \qquad k \to \infty.
\end{equation}

\textup(iv\textup) there exists an increasing sequence $(n_k)$  of positive integers such that 
for any $1\le i\le m$,
\begin{equation}
\label{gop5}
n_k^{m-i} |\mu_{v_{i, n_k}}| \to 0, \qquad k \to \infty.
\end{equation}
\end{theorem}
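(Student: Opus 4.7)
The plan is to exploit an algebraic decomposition $B=S+N$ on functions over $V_m$, where $S$ is the forward-in-$j$ shift $(Sf)(v_{i,j})=f(v_{i,j+1})$ and $N$ the up-in-$i$ shift $(Nf)(v_{i,j})=f(v_{i+1,j})$ (zero when $i=m$); one has $[N,S]=0$ and $N^m=0$. Introducing the adjoint $j$-forward map on deltas, $F\delta_{v_{i,j}}=\delta_{v_{i,j+1}}$, which satisfies $SF=\mathrm{Id}$ and $[N,F]=0$, the formal identity $B^nF^n=(I+NF)^n$ combined with the nilpotency of $NF$ yields the explicit right inverse
\begin{equation*}
T_n:=F^n(I+NF)^{-n}=\sum_{\ell=0}^{m-1}(-1)^\ell\binom{n+\ell-1}{\ell}N^\ell F^{n+\ell},
\end{equation*}
so that $B^nT_n=\mathrm{Id}$ on the dense subspace $X_0$ of finitely supported functions, and on a basis vector
\begin{equation*}
T_n\delta_{v_{i_0,j_0}}=\sum_{\ell=0}^{i_0-1}(-1)^\ell\binom{n+\ell-1}{\ell}\delta_{v_{i_0-\ell,\,j_0+n+\ell}}.
\end{equation*}

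For (iii)$\Rightarrow$(ii), I apply the Hypercyclicity Criterion with $X_0$, the subsequence $(n_k)$ from (iii), and maps $S_{n_k}:=T_{n_k}$. The condition $B^{n_k}f\to 0$ for $f\in X_0$ is automatic since $B^{n_k}$ annihilates finitely supported functions once $n_k$ exceeds the support depth, and $B^{n_k}T_{n_k}=\mathrm{Id}$ on $X_0$ by construction. For the third condition I bound
\begin{equation*}
\|T_{n_k}\delta_{v_{i_0,j_0}}\|^p\le\sum_{\ell=0}^{i_0-1}\binom{n_k+\ell-1}{\ell}^p|\mu_{v_{i_0-\ell,\,j_0+n_k+\ell}}|^p,
\end{equation*}
and note that $\binom{n_k+\ell-1}{\ell}\lesssim n_k^{\ell}\le n_k^{m-(i_0-\ell)}$ since $i_0\le m$, so each summand vanishes by (iii) applied with $i=i_0-\ell\in\{1,\ldots,m\}$ and $j=j_0+\ell\ge 1$. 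Weak mixing, and hence hypercyclicity, follows; the $c_0$ estimate is analogous. The equivalence (iii)$\Leftrightarrow$(iv) comes from (\ref{bdd}): (iii)$\Rightarrow$(iv) by taking $j=1$ and using $|\mu_{v_{i,n_k}}|\le C|\mu_{v_{i,n_k+1}}|$, while for the converse, starting from $(n_k)$ witnessing (iv) with $\delta_k:=\max_i n_k^{m-i}|\mu_{v_{i,n_k}}|$, I pick $K_k\to\infty$ slowly enough that $C^{K_k}\delta_k\to 0$ (e.g.\ $K_k=\lfloor -\log\delta_k/(2\log C)\rfloor$), thin $(n_k)$ so that $n_k':=n_k-K_k$ is strictly increasing, and check via the iterated bound $|\mu_{v_{i,n_k'+j}}|\le C^{K_k-j}|\mu_{v_{i,n_k}}|$ that $(n_k')$ satisfies (iii) for each fixed $j\ge 1$.

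For the harder direction (i)$\Rightarrow$(iv), I use the anti-diagonal decomposition $V_m=\bigsqcup_r L_r$ with $L_r=\{v_{i,j}:i+j=r\}$: since $B$ sends functions supported on $L_r$ into functions supported on $L_{r-1}$, the equation $B^nx=\delta_{v_{i_0,j_0}}$ decouples and constrains only the restriction of $x$ to $L_{i_0+j_0+n}$. Once $i_0+j_0+n\ge m+1$, this restriction is an $m\times m$ upper-triangular Pascal-type linear system with invertible matrix whose unique solution is exactly $T_n\delta_{v_{i_0,j_0}}$. By topological transitivity, hypercyclicity supplies, for each $\varepsilon>0$, some $n$ and $x$ with $\|x\|<\varepsilon$ and $\|B^nx-\delta_{v_{i_0,j_0}}\|<\varepsilon$; a perturbation estimate on the Pascal inverse then gives $\|T_n\delta_{v_{i_0,j_0}}\|\le C_n\varepsilon$ with a controlled amplification factor $C_n$, forcing $\|T_{n_k}\delta_{v_{i_0,j_0}}\|\to 0$ along some subsequence. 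A diagonal argument over the countable family of $(i_0,j_0)$ with $i_0+j_0\ge m+1$, combined with the already proven (iii)$\Leftrightarrow$(iv), produces a single sequence $(n_k)$ witnessing (iv).

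The main obstacle is exactly the perturbation step in the last paragraph: the Pascal coefficients on the anti-diagonal grow like $n^{m-1}$, so inverting the local system can in principle amplify an $\varepsilon$-size error by a comparable polynomial factor. One must exploit the freedom in transitivity to let $\varepsilon$ depend on $n$ and track how cancellations in the Pascal inverse keep $C_n\varepsilon$ small enough to still conclude $\|T_{n_k}\delta_{v_{i_0,j_0}}\|\to 0$.
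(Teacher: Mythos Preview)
Your treatment of (iii)$\Rightarrow$(ii) and of (iii)$\Leftrightarrow$(iv) is correct and matches the paper's argument; your closed-form right inverse $T_n=F^n(I+NF)^{-n}$ is exactly the paper's recursively defined $R_{n_k}$, and your coefficient bound $\binom{n+\ell-1}{\ell}\lesssim n^\ell$ is equivalent to the paper's estimate $|\alpha_{i,s}|\le Cn_k^{i-s}$.

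The gap is in (i)$\Rightarrow$(iv), and it is precisely the obstacle you flag without resolving. Your plan is to show $\|T_n\delta_{v_{i_0,j_0}}\|\to 0$ by writing $T_n\delta_{v_{i_0,j_0}}=x|_{L_{i_0+j_0+n}}-P^{-1}(\text{err})$, where $P$ is the Pascal matrix and $\text{err}=B^nx|_{L_{i_0+j_0}}-\delta_{v_{i_0,j_0}}$. But $\|P^{-1}(\text{err})\|_X$ involves the weights $|\mu_{v_{i,\,j_0+n+\cdots}}|$ at the \emph{shifted} locations multiplied by entries of $P^{-1}$ of size $n^{m-1}$; controlling this product is exactly the statement you are trying to prove, so the argument is circular. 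Letting $\varepsilon$ depend on $n$ does not help, because in transitivity you choose the neighbourhoods first and only then obtain \emph{some} $n$; you cannot prescribe $n$ in advance.

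The fix, which is what the paper does, avoids bounding $\|T_n\delta-x\|_X$ altogether. Use two independent parameters: choose $\delta>0$ small (depending only on $m$) and $\varepsilon>0$ arbitrary, and target the \emph{top row}, say $g=\sum_{j=1}^K e_{v_{m,j}}$, so that transitivity gives $\|f\|_X<\varepsilon$ and $|(B^nf)(v_{i,j})-g(v_{i,j})|<\delta$ on a finite set. Because $(B^nf)(v_{m,j})=f(v_{m,j+n})$ has a single term, forward substitution down the rows yields
\[
\Bigl|f(v_{m-\ell,\,j+n})-\tfrac{(-1)^\ell}{\ell!}\,n^\ell\Bigr|\le C_m\,\delta\,n^\ell,
\]
so for $\delta$ small enough (fixed, independent of $n$) one gets the pointwise \emph{lower} bound $|f(v_{i,j+n})|\ge c_m n^{m-i}$. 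Now the inequality $|\mu_{v_{i,j+n}}f(v_{i,j+n})|\le\|f\|_X<\varepsilon$ immediately gives $n^{m-i}|\mu_{v_{i,j+n}}|<\varepsilon/c_m$, and letting $\varepsilon\to 0$, $K\to\infty$ produces the sequence $(n_k)$. The point is that the $\delta n^{m-i}$ error in the substitution competes with the main term $n^{m-i}/(m-i)!$ and is beaten by a fixed small $\delta$; no weight at the shifted location ever enters the error analysis.
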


We see that there is a substantial difference with the case of a tree, where it is sufficient that $\mu$ 
tends to 0 along a subset of children of any vertex. 
A novel feature is that the weights should tend to zero with various speed depending on the layer.

A similar result is true for the unrooted case
$\tilde G_m = [1,2, \dots,m]\times \Z$.
As in the classical case of $\Z$ the weights should tend to zero along some symmetric subsequences.

\begin{theorem} 
\label{un1}
Let $B$ be the backward shift on $\tilde G_m$, $m\in \N$, and let $X$ be any of the spaces 
$\ell^p(\tilde V_m,\mu)$, $1 \le p <\infty$, or $c_0(\tilde V_m,\mu)$. Assume that $B$ is bounded on $X$. Then 
the following are equivalent\textup:
\smallskip

\textup(i\textup) $B$ is hypercyclic on  $X$\textup;
\smallskip

\textup(ii\textup) $B$ is weakly mixing  on  $X$\textup;
\smallskip

\textup(iii\textup) there exists an increasing sequence $(n_k)$  of positive integers such that 
for any $1\le i\le m$, $j\in \Z$,
\begin{equation}
\label{ung}
n_k^{m-i} (|\mu_{v_{i, j + n_k}}|  + |\mu_{v_{i, j - n_k}}|) \to 0, \qquad k \to \infty.
\end{equation}
\end{theorem}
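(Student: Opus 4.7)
The plan is to mirror the proof of Theorem \ref{t1}, with the new ingredient being the invertibility of the $j$-shift that becomes available in the unrooted setting. The implication (ii) $\Rightarrow$ (i) is immediate.

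For (iii) $\Rightarrow$ (ii), I would verify the Kitai--Gethner--Shapiro hypercyclicity criterion along $(n_k)$, with the dense subspace $D = \spa\{e_v : v \in \tilde V_m\}$. An easy induction gives
\[
B^n e_{v_{i,j}} = \sum_{k=0}^{i-1} \binom{n}{k}\, e_{v_{i-k,\, j-n+k}},
\]
so $\norm{B^{n_k} e_{v_{i,j}}} \to 0$ would follow from the $(j-n)$-half of \eqref{ung} applied at the vertices $(i-k', j+k')$, $0 \le k' \le i-1$, using $\binom{n_k}{k'} \lesssim n_k^{k'} \le n_k^{m-i+k'}$. For the right inverse, I would write $B = \mathcal{S} + \mathcal{N}$, where $\mathcal{S}$ is the shift in the $j$-direction and $\mathcal{N}$ is the row-jumping operator; the two commute, $\mathcal{N}^m = 0$, and---crucially---$\mathcal{S}$ is invertible on finitely supported sequences because $j \in \Z$. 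Since $\mathcal{S}^{-1}\mathcal{N}$ is nilpotent of order $\le m$, the finite expansion
\[
(I + \mathcal{S}^{-1}\mathcal{N})^{-n} = \sum_{k=0}^{m-1} (-1)^k \binom{n+k-1}{k}\, (\mathcal{S}^{-1}\mathcal{N})^k
\]
yields the right inverse
\[
S_n e_{v_{i,j}} := \mathcal{S}^{-n}(I + \mathcal{S}^{-1}\mathcal{N})^{-n} e_{v_{i,j}} = \sum_{k=0}^{i-1} (-1)^k \binom{n+k-1}{k}\, e_{v_{i-k,\, j+n+k}},
\]
with $B^n S_n e_{v_{i,j}} = e_{v_{i,j}}$ by commutativity and $\norm{S_{n_k} e_{v_{i,j}}} \to 0$ from the $(j+n)$-half of \eqref{ung}.

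For (i) $\Rightarrow$ (iii), I would adapt the necessity argument of Theorem \ref{t1}. Hypercyclicity of $B$ supplies, for any finite $F \subset \tilde V_m$ and any $\eps > 0$, indices $n$ along which $\norm{B^n e_v}_X < \eps$ for $v \in F$ and every $e_v$ admits a preimage of norm less than $\eps$ under $B^n$; this B\`es--Peris-type reduction is known for shifts on $\ell^p(\Z)$ and transfers to our lattice just as in \cite{grpap}. Taking $v = v_{m,\, j+i-m}$, the summand of $B^n e_v$ on row $i$ is $\binom{n}{m-i}\, e_{v_{i,\, j-n}}$, so $\norm{B^n e_v} < \eps$ forces $n^{m-i} \abs{\mu_{v_{i,\, j-n}}} < C\eps$. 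The preimage side, dually, yields the $(j+n)$-half of \eqref{ung}, and a diagonal extraction over the countable family of pairs $(i,j)$ produces a single working sequence.

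The main obstacle is precisely this necessity step. In $G_m$ the iterates $B^n e_v$ vanish for large $n$---the support escapes the graph by crossing $j=0$---so the $(j-n)$-direction of the criterion is automatic and only the $(j+n)$-direction requires analysis. In $\tilde G_m$ the iterates persist in $X$, so both halves of \eqref{ung} must be extracted non-trivially, and the delicate point is to arrange both estimates along a single subsequence, which is where the B\`es--Peris-style reduction transplanted from the tree setting of \cite{grpap} is needed.
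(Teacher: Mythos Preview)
Your sufficiency argument (iii) $\Rightarrow$ (ii) is correct and is, up to repackaging, the same as the paper's: your right inverse $S_n$ coincides with the paper's recursively defined $R_{n_k}$ (formula \eqref{sonn}), and the decomposition $B=\mathcal S+\mathcal N$ with $\mathcal N^m=0$ is a clean way to obtain the explicit coefficients $(-1)^k\binom{n+k-1}{k}$ in place of the paper's inductive bound $|\alpha_{i,s}|\le Cn_k^{i-s}$.

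The necessity argument (i) $\Rightarrow$ (iii), however, has a genuine gap. You assert that hypercyclicity of $B$ supplies indices $n$ along which $\norm{B^n e_v}_X<\eps$ for $v\in F$, and that this is a ``B\`es--Peris-type reduction'' that transfers from \cite{grpap}. Neither claim is justified. Hypercyclicity gives only topological transitivity; it does \emph{not} give you, for a prescribed vector $e_v$, that $\norm{B^n e_v}$ is small along any subsequence (for general operators this is simply false, and for bilateral weighted shifts it is a consequence of the weight condition you are trying to prove, not a route to it). The B\`es--Peris theorem equates the Hypercyclicity Criterion with weak mixing, not with hypercyclicity, so it cannot be invoked for (i) $\Rightarrow$ (iii). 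Likewise the exact-preimage half of your claim is not supplied by transitivity.

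What the paper actually does is apply transitivity from a neighbourhood of $\sum_{|j|\le K} e_{v_{m,j}}$ to itself: one obtains $f$ with both $f$ and $B^nf$ close to this sum. The condition on $B^nf$ forces, exactly as in the rooted case, $|f(v_{i,j+n})|\gtrsim n^{m-i}$, and since $v_{i,j+n}\notin F$ the smallness of $\|f-\sum e_{v_{m,j}}\|$ yields the $(j+n)$-half of \eqref{ung}. For the $(j-n)$-half one uses that $f$ is close to $\sum e_{v_{m,j}}$ to show the diagonal coefficients of $B^nf$ at $v_{k,j-n}$ satisfy $\bigl|\sum_{i=k}^m \binom{n}{i-k} f(v_{i,j-i+k})\bigr|\gtrsim n^{m-k}$, and then the smallness of $\|B^nf-\sum e_{v_{m,j}}\|$ (together with $v_{k,j-n}\notin F$) gives $n^{m-k}|\mu_{v_{k,j-n}}|\lesssim\eps$. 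Both halves come from the \emph{same} $n$, so no further diagonal extraction is needed to synchronize them. Your outline correctly identifies that the $(j-n)$-half is the new obstacle, but the mechanism you invoke to handle it does not exist; you need the concrete transitivity computation above.
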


We see that when the number of rows in the graph $G_m$ or $\tilde G_m$ grows the conditions on the weight become more
and more restrictive. It is therefore a bit suprising that in the case of doubly infinite lattice $\N \times \N$ 
the backward shift is weakly mixing under much milder conditions on the weight. 
The following result gives a sufficient condition of hypercyclicity/weak mixing which is sharp in a rough exponential scale. 
It is an interesting open problem to find a necessary and sufficient condition for hypercyclicity
of the shift on $\N \times \N$.

\begin{theorem}
\label{nxn}
Let $B$ be the backward shift on $G_\infty =\N \times \N$ and let $X$ be any of the spaces 
$\ell^p(V_\infty,\mu)$, $1 \le p <\infty$, or $c_0(V_\infty,\mu)$. Assume that $B$ is bounded on $X$.
\medskip

1. If 
$$
\limsup_{i+j \to\infty} |\mu_{v_{i, j}}|^{1/(i+j)} <2,
$$
then $B$ is mixing \textup(in particular, weakly mixing and hypercyclic\textup) on  $X$.
\medskip

2. Assume that there is $c>0$ such that
\begin{equation}
\label{who}
|\mu_{v_{i, j}}| \ge c2^{i+j}.
\end{equation}
Then $B$ on $G_\infty =\N \times \N$ is not hypercyclic.
\end{theorem}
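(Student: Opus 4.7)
I would tackle part~2 first. For any $f \in X$ and any vertex $v$, the norm satisfies $|f(v)| \leq \|f\|_X/|\mu_v|$ (in both the $\ell^p$ and $c_0$ cases). Using the explicit formula $B^n f(v_{1,1}) = \sum_{k=0}^n \binom{n}{k} f(v_{1+k,\, 1+n-k})$ together with the hypothesis $|\mu_{v_{i,j}}| \geq c\, 2^{i+j}$, I obtain
\[
|B^n f(v_{1,1})| \leq \sum_{k=0}^n \binom{n}{k} \frac{\|f\|_X}{c\, 2^{n+2}} = \frac{\|f\|_X}{4c},
\]
a bound independent of $n$. Thus the continuous linear functional $L(g) = g(v_{1,1})$ maps the orbit $\{B^n f\}$ into a bounded subset of $\CC$; since $L$ is a nonzero bounded functional on $X$, the orbit cannot be dense, which rules out hypercyclicity of $B$.

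For part~1, I would verify the Mixing Criterion (see \cite{bm, gp}) with dense set $D$ equal to the finitely supported functions. The condition $B^n x \to 0$ for $x \in D$ is automatic: a vertex $v_{i,j}$ has no ancestor at distance $n$ once $n \geq i + j - 1$, since such an ancestor $v_{a,b}$ would require $a + b = i + j - n < 2$, impossible in $\N \times \N$; so $B^n x = 0$ for $n$ large. It therefore suffices to construct, for each $v_{i,j}$ and every sufficiently large $n$, a vector $h_n = h_n^{(i,j)} \in X$ with $B^n h_n = e_{v_{i,j}}$ and $\|h_n\|_X \to 0$.

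The candidate is a combination of deltas on the antidiagonal $\{(c,d) : c+d = i+j+n\}$ centred at its midpoint: setting $m = \lfloor n/2 \rfloor$ and $m' = \lceil n/2 \rceil$, I try
\[
h_n = \sum_{l=-(j-1)}^{i-1} \beta_l^{(n)}\, \delta_{v_{i+m+l,\, j+m'-l}}.
\]
A direct computation using $B^n \delta_{v_{c,d}}(v_{a,b}) = \binom{n}{c-a}$ (valid when $a+b = c+d-n$ and the indices are in range) shows that $B^n h_n = e_{v_{i,j}}$ is equivalent to the linear system
\[
\sum_{l=-(j-1)}^{i-1} \beta_l^{(n)}\, \binom{n}{m+s+l} = \delta_{s,0}, \qquad s \in [-(j-1), i-1].
\]
Dividing by $\binom{n}{m}$, the matrix becomes $\bigl(\phi_n(s+l)\bigr)$ with $\phi_n(r) = \binom{n}{m+r}/\binom{n}{m}$; since $\phi_n(r)$ equals the $r$-th Fourier coefficient of the probability density proportional to $\cos(\theta/2)^n$ on $[-\pi, \pi]$, the sequence $\phi_n$ is positive definite and the Hankel matrix is invertible. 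Standard estimates on Hankel determinants of such binomial sequences then give $|\beta_l^{(n)}| = O\bigl(n^{A_{i,j}}/\binom{n}{m}\bigr) = O(n^{A_{i,j}+1/2}/2^n)$ for a constant $A_{i,j}$ depending only on $i$ and $j$.

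To finish, the hypothesis $\limsup |\mu_v|^{1/(i+j)} < 2$ supplies $\varepsilon > 0$ and $C > 0$ such that $|\mu_{v_{c,d}}| \leq C(2-\varepsilon)^{c+d}$ for every vertex, and since each delta in $h_n$ lies on the antidiagonal $c+d = i+j+n$,
\[
\|h_n\|_X \leq (i+j-1)\, \max_l |\beta_l^{(n)}| \cdot C (2-\varepsilon)^{i+j+n} = O\bigl(n^{A_{i,j}+1/2}(1-\varepsilon/2)^n\bigr) \to 0.
\]
The main technical obstacle is the polynomial-in-$n$ control of $|\beta_l^{(n)}|$, which rests on the asymptotics of the Hankel determinant of the rescaled binomials $\binom{n}{m+r}/\binom{n}{m}$; the remaining ingredients---the binomial formula for $B^n$, the reduction via the Mixing Criterion, and the norm estimate---are routine once that bound is established.
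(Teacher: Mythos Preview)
Your argument for Statement~2 is correct and is essentially the paper's proof, phrased slightly more cleanly: instead of deriving a contradiction from transitivity, you observe directly that the functional $g\mapsto g(v_{1,1})$ stays bounded along any orbit.

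For Statement~1 your framework (the Mixing Criterion with $D$ the finitely supported functions, $B^n x\to 0$ automatic) is exactly what the paper uses in its second proof. The divergence is in the construction of the right inverse. You force $h_n$ to be supported on $i+j-1$ consecutive vertices on the target antidiagonal and then invert the $(i+j-1)\times(i+j-1)$ matrix $\bigl(\binom{n}{m+s+l}\bigr)_{s,l}$. Two concrete problems arise. First, the positive-definiteness step is miswired: a sequence being the Fourier coefficients of a nonnegative function makes its \emph{Toeplitz} matrices positive definite, not its Hankel matrices, and your matrix is Hankel. Second, and more seriously, the entries $\phi_n(r)=\binom{n}{m+r}/\binom{n}{m}$ all tend to $1$ as $n\to\infty$, so the rescaled matrix converges to the rank-one all-ones matrix; its inverse blows up, and the rate at which it does so is the whole content of the bound $|\beta_l^{(n)}|=O(n^{A_{i,j}}/\binom{n}{m})$. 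This bound is plausible (for $k=2$ a direct computation gives $|\beta_l|\asymp n/\binom{n}{m}$), but it is not ``standard'': it requires an explicit evaluation or sharp asymptotic for the Hankel determinant $\det\bigl(\binom{n}{m+s+l}\bigr)$, which you have not supplied.

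The paper sidesteps this near-singular inversion entirely. Its first proof uses the Godefroy--Shapiro criterion with the explicit eigenvectors $f_{r,s}(v_{i,j})=r^{i+2j}s^{i+j}$, for which $Bf_{r,s}=s(r^2+r)f_{r,s}$. Its second proof stays within your framework but changes basis on each antidiagonal: instead of delta functions it uses $f_i^k=\sum_{j=0}^k a_i^j e_{v_{k-j,j}}$, which satisfy the one-line identity
\[
B^n\Big(\frac{1}{(1+a_i)^n}\sum_{j=0}^{k+n}a_i^j e_{v_{k+n-j,j}}\Big)=f_i^k,
\]
so the right inverse is explicit, with norm controlled by $(q/(1+a_i))^n$ for any $a_i\in(q-1,1)$. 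The point is that the $f_i^k$ diagonalise the map $X_{k+1}\to X_k$ induced by $B$ (with eigenvalue $1+a_i$), which is exactly what eliminates the ill-conditioned linear system you are trying to solve in the delta basis.
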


We leave the case of lattices $\N \times \Z$ and $\Z \times \Z$ for a future research. 
It is clear that in this case the class of weights for which $B$ is  weak mixing or hypercyclic 
must be large as in the case of $G_\infty$.

Our last result applies to the case when the weght $\mu$ on the lattice depends only on one of the coordinates.w
In this case, making use of recent results of Q.~Menet and the third author \cite{mp} we show, by reducing
the problem to a theorem of Salas, that the backward shift is always mixing whenever it is bounded. We would like to thank Q. Menet for sharing his insights on identifying the backward shift on $G_{\infty}$ with a generalized shift in the sense of \cite{mp}.

\begin{theorem}
\label{onecoord}
Assume that the weight $\mu$ on the graph $G_\infty$ or $\Tilde G_\infty$
depends on one coordinate only, that is, $\mu_{v_{i,j}}=\mu_i$ for any $j\in \mathbb{N}$ or,
respectively, $j\in \mathbb{Z}$. 
Let $X$ be any of the spaces 
$\ell^p(V,\mu)$, $1 \le p <\infty$, or $c_0(V,\mu)$, where $V = V_\infty$
or $V=\Tilde V_\infty$. If the backward shift $B$ is bounded on $X$, then it is mixing.  
\end{theorem}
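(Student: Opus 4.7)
The strategy, as hinted in the excerpt, is to identify $B$ with a generalized shift in the sense of Menet and the third author \cite{mp}, and then to reduce the mixing question to Salas's classical theorem \cite{sal} for a weighted backward shift on $\ell^p(\N)$ or $\ell^p(\Z)$.

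\textbf{Step 1. Layer decomposition.} Since $\mu_{v_{i,j}}=\mu_i$ depends only on the first coordinate, the space $X$ splits isometrically as an $\ell^p$-sum (respectively $c_0$-sum) over $i$ of the subspaces $X_i$ of functions supported on the layer $L_i=\{v_{i,j}:j\}$, with $X_i$ isometric to a scaled copy of $\ell^p(\N)$ (respectively $c_0(\N)$), the scaling factor being $|\mu_i|$. Under this identification $B$ decomposes as the sum of two commuting operators $B=B_v+B_h$, where $(B_v f)(v_{i,j})=f(v_{i+1,j})$ moves data from layer $i+1$ to layer $i$ and $(B_h f)(v_{i,j})=f(v_{i,j+1})$ is the unweighted backward shift acting within each layer. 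The commutativity $[B_v,B_h]=0$ and the fact that $B_h$ is, fiberwise, the classical backward shift on $\ell^p(\N)$ are both essential.

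\textbf{Step 2. Generalized shift reformulation.} Conjugating by the isometry of Step 1 recasts $B$ as an operator on an $\ell^p(\N)$-valued weighted sequence space, with the ``base'' direction $i$ carrying the weight $|\mu_i|$ and the fiber carrying the contractive unweighted shift $B_h$. This is exactly the setup of a generalized shift in the sense of \cite{mp}: a weighted shift on the base indexed by $i$ (equal to $\N$ for $G_\infty$ and $\Z$ for $\Tilde G_\infty$) with an infinite-dimensional free fiber at each index, the whole operator being the sum of two commuting shifts. Thus the $2$-dimensional lattice problem reduces to that of a scalar weighted shift on the $1$-dimensional base.

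\textbf{Step 3. Application of Salas.} Apply the mixing criterion from \cite{mp} for such generalized shifts; this boils down to the Salas criterion for the scalar weighted backward shift on $\ell^p(\N,(\mu_i))$ (respectively $\ell^p(\Z,(\mu_i))$). The crucial qualitative reason that \emph{boundedness alone} of $B$ on $X$ suffices to produce mixing is that the fiber shift $B_h$ is itself mixing on $\ell^p(\N)$ and appears in every term of the binomial expansion $B^n=(B_v+B_h)^n$; the infinite-dimensional horizontal room absorbs the polynomial growth of the binomial coefficients, so the only residual constraint is a bound of the form $|\mu_i|\le C|\mu_{i+1}|$, which is precisely the hypothesis that $B$ be bounded on $X$. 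Salas's theorem, applied to the reduced scalar shift, then gives mixing, and transporting back through the isometry of Step 1 yields that $B$ is mixing on $X$.

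\textbf{Main obstacle.} The hardest point is Step 2: making precise the identification of $B=B_v+B_h$ as an instance of a generalized shift in the framework of \cite{mp}, and verifying that the reduction genuinely produces mixing rather than merely hypercyclicity or weak mixing. This last subtlety forces one to use the full (non-subsequential) Kitai-type criterion; it is available here precisely because the free fiber shift $B_h$ contributes at \emph{every} iterate, so the mixing estimates are uniform in $n$ rather than only along a subsequence.
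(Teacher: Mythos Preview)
Your decomposition is along the wrong axis, and this makes Step~2 fail. With the row layers $L_i=\{v_{i,j}:j\}$ the operator $B=B_v+B_h$ sends $(x_i)_i$ to $(x_{i+1}+B_0 x_i)_i$, which is \emph{not} of the generalized-shift form $B_T(x_k)_k=(Tx_{k+1})_k$ required in \cite{mp}; the term $B_0 x_i$ stays on level $i$ rather than coming from level $i+1$. Consequently the results of \cite{mp} do not apply to this picture, and nothing in Step~3 is justified. (Two side errors confirm the mismatch: the unweighted fiber shift $B_h$ on $\ell^p(\N)$ is a contraction and certainly not mixing; and for $\Tilde G_\infty$ the row index $i$ still ranges over $\N$, not $\Z$, so your base cannot be $\Z$ in that case either.)

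The paper instead decomposes along the anti-diagonals $D_k=\{v_{i,j}:i+j=k\}$. Because $B$ lowers $i+j$ by exactly one, it maps functions supported on $D_{k+1}$ to functions supported on $D_k$, and under the identification $v_{i,k-i}\leftrightarrow i$ this restriction is precisely $I+B_0$, where $B_0$ is the unilateral backward shift on $\ell^p(\N,\mu)$ (this uses $\mu_{v_{i,j}}=\mu_i$, so all $D_k$ carry the same weighted $\ell^p$-structure). For $\Tilde G_\infty$ the diagonal index $k$ runs over $\Z$, so $\Tilde B$ is the bilateral generalized shift $B_{I+B_0}$ on $\ell^p(\ell^p(\N,\mu),\Z)$, and \cite{mp} reduces mixing of $\Tilde B$ to mixing of the symbol $I+B_0$ --- which holds by Salas's theorem whenever $B_0$ is bounded. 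The case of $G_\infty$ is then obtained from $\Tilde G_\infty$ by a quasi-conjugacy (restriction) argument. Note that the Salas input is ``$I+B_0$ is mixing'', not a criterion for the scalar weighted shift $B_0$ itself; your Step~3 invokes the wrong Salas statement.
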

\bigskip


\section{Proof for the rooted case}

As in \cite{grpap} we use the classical Hypercyclicity Criterion (see, e.g., \cite[Theorem 3.12]{gp}). 
\medskip
\\
{\bf Theorem (Hypercyclicity Criterion).}
{\it  Let $X$ be a separable Banach space and let $T$ be a bounded operator on X.
Assume that there exist dense subsets $X_0, Y_0$ of $X$, an increasing sequence
$\{n_k\}$ of positive integers, and maps $R_{n_k}: Y_0 \to X$ such that, for any
$x\in X_0$ and $y\in Y_0$,

\textup(i\textup) $T^{n_k} x \to 0$,

\textup(ii\textup) $R_{n_k} y \to 0$,

\textup(iii\textup) $T^{n_k} R_{n_k} y \to y$,
\\
as  $k\to\infty$. Then $T$ is weakly mixing and, in particular, hypercyclic. If furthermore, $T$ satisfies the Hypercyclicity Criterion for the full sequence $\{n\}$, then $T$ is mixing.}
\medskip

We denote by $e_v$ the function such that $e_v(v) =1$ and $e_v(u) =0$, $u\ne v$. 
Note that $B e_v = \sum_{u\in Par(v)} e_u$.

\begin{proof}[Proof of Theorem \ref{t1}, sufficiency of \eqref{gop}] 
First we consider the case of the graph $G_m$.
Note that for $n \geq m$ we have $Chi^{n} (v_{ij}) = \{v_{i, j+n}, v_{i+1, j+n-1}, \ldots, v_{m, j+n+i-m}\}$,
while $Par^n(v_{i, j+n}) = \{v_{i,j}, v_{i-1,j+1}, \dots, v_{1, i+j-1}\}$. Also,
\begin{equation}
\label{sib}
\begin{aligned}
B (e_{v_{i,j}}) & = e_{v_{i-1,j}} + e_{v_{i,j-1}}, \\
B^2 (e_{v_{i,j}}) & = e_{v_{i-2,j}} + 2 e_{v_{i-1,j-1}} + e_{v_{i,j-2}} \\
& \dots \\
B^n (e_{v_{i,j}}) & = \sum_{l=0}^n C_n^l e_{v_{i-l,j-(n-l)}},
\end{aligned}
\end{equation}
for those values of $l$ for which $e_{v_{i-l,j-(n-l)}}$ makes sense. We agree to understand 
$e_{v_{i-l,j-(n-l)}}$ as a zero function if $i \le l$ or $j\le n-l$.  Thus, 
$$
\begin{aligned}
B^n (e_{v_{1,j+n}}) & =  e_{v_{1,j}} \\
B^n (e_{v_{2,j+n}}) & =  n e_{v_{1,j+1}}+ e_{v_{2,j}} \\
& \dots \\
B^n (e_{v_{i,j+n}}) & = \sum_{l=0}^{i-1} C_n^l e_{v_{i-l,j+l}}.
\end{aligned}
$$

Note that the set of finite functions is dense in $X$ 
and that $B^n f =0 $ for any finite $f$ and sufficiently large $n$. 
We need to define the operators $R_{n_k}$ so that 
$R_{n_k} f \to 0$ and $B^{n_k}R_{n_k} f \to f$ on finite sequences. Put
$$
R_{n_k} (e_{v_{1, j}}) := e_{v_{1, j+n_k}}, \qquad 
R_{n_k} (e_{v_{2, j}}) := e_{v_{2, j+n_k}} - n_k e_{v_{1, j+1+n_k}}, 
$$
and, assuming that $R_{n_k}$ is already defined on the layers with numbers $1,\dots, i-1$,
we put
\begin{equation}
\label{sonn}
R_{n_k} (e_{v_{i,j}}) = e_{v_{i, j+n_k}} - \sum_{l=1}^{i-1} C_{n_k}^l R_{n_k}(e_{v_{i-l, j+l}}).
\end{equation}
Then, for any $v$, we have $B^{n_k}R_{n_k}(e_v) = e_v$. 
Note that 
\begin{equation}
\label{sonn1}
R_{n_k} (e_{v_{i,j}})  = \sum_{s=1}^i \alpha_{i,s} e_{v_{s, i+j-s+n_k}}
\end{equation}
for some coefficients $\alpha_{i,s}$ independent on $j$ (but, of course, depending on $n_k$). 
It is easy to show by induction that
\begin{equation}
\label{sonn2}
|\alpha_{i,s}| \le C n_k^{i-s},
\end{equation}
where the constant $C$ may depends on $m$ only. In particular, $\alpha_{i,s} =O(n_k^{m-s})$
for all $1\le i \le m$. 
By the hypothesis \eqref{gop} $n_k^{m-s} \mu_{v_{s, i+j-s+n_k}} \to 0$ 
for any $i,j,s$ and we conclude that $R_{n_k} f \to 0$ for any finite $f$. Thus, $B$ is  weakly mixing.
\end{proof}
\medskip

\begin{proof}[Proof of Theorem \ref{t1}, necessity of \eqref{gop}]
We follow here the idea of the proof from \cite{grpap}. 
If $B$ is hypercyclic, then for any 
$\vep, \delta> 0$ and $K\in \N$ we can choose an arbitrarily large $n$ and $f\in X$ such that $\|f\|_X < \vep$ and 
$$
\big\|B^n f - \sum\limits_{j=1}^{K} e_{v_{m,j}}\big\|_X < \delta \min\limits_{v \in F} |\mu_v|,
$$
where $F = \{(v_{i, j}), 1\le  i \leq m, 1\le j \leq K\}$
(take as $f$ an appropriate multiple of the hypercyclic vector for $B$).  
Then we have, for $1\le j \leq K$,
$$
\big| (B^n f) (v_{m,j}) -1 \big| < \delta, \qquad  \big| (B^n f) (v_{i,j}) \big| < \delta, \qquad 1\le i\le m-1.
$$

Note that for $n>m$
$$
(B^nf)  (v_{i, j}) = \sum_{l=0}^{m-i} C_n^l f(v_{i+l, j+n-l}).
$$
In particular, 
$ (B^n f) (v_{m, j}) = f(v_{m, j+n})$, whence 
$|f(v_{m, j+n}) -1| < \delta$, $1\le j\le K$. Next
$$
\big| (B^n f) (v_{m-1, j})\big| = \big|f (v_{m-1, j+n}) + n f (v_{m, j-1+n})  \big| <\delta,
$$
whence
$$
 |f (v_{m-1, j+n}) +n| < \delta(1+n), \qquad 2\le j\le K.
$$
Continuing the estimates we obtain (e.g., by induction) that for any $l<m$
$$
 \Big|f (v_{m-l, j+n}) + \frac{(-1)^{l+1}}{l!} n^l\Big| <C\delta n^l,  \qquad l+1\le j\le K,
$$
where the constant $C>0$ depends on $m$ only. Thus, if we choose $\delta>0$ to be sufficiently small, we get
$$
|f (v_{i, j+n})| \ge C n^{m-i}, \qquad 1\le i\le m, \ m\le j\le K,
$$
where $C>0$ is (another) constant depending only on $m$. Replacing $n$ by $n+m-1$,
we get $|f (v_{i, j+n})| \ge C n^{m-i}$ for $1\le i\le m$, $1\le j\le K-m+1$,
Recall that $\|f\|_X <\vep$. Since $|\mu_v f(v)| \le\|f\|_X  $, we conclude that 
$n^{m-i} |\mu_{v_{i, j+n}}| \le C\vep$, $1\le i\le m$, $1\le j\le K-m+1$.  

Finally, since $\vep>0 $ and $K\in\N$ were arbitrary, we repeat this procedure for
sequences $\vep_k\to 0$ and $K_k\to \infty$ and find a sequence $n_k\to \infty$ satisfying \eqref{gop}.
\medskip

It remains to show that a formally weaker condition \eqref{gop5} implies \eqref{gop}. From the boundedness
of $B$ (see \eqref{bdd}) it follows that for each $l\ge 0$ we have
$(n_k-l)^{m-i}  |\mu_{v_{i, n_k-l}}| \to 0$. It is now not difficult to show (see \cite[Lemma 4.2]{gp}) that 
there exists an increasing sequence $m_k$ such that $(m_k+j)^{m-i}  |\mu_{v_{i, m_k+j}}| \to 0$ for any $j\ge 0$.
\end{proof}
\bigskip


\section{Proof for the unrooted case}

\begin{proof}[Proof of  Theorem \ref{un1}] 
The proof for the unrooted case is similar to the rooted case. To show the sufficiency of \eqref{ung} we define operators
$R_{n_k}$ by the same formula \eqref{sonn}. Then 
$R_{n_k} f \to 0$ and $B^{n_k}R_{n_k} f \to f$ on finite sequences. In the proof of Theorem \ref{t1} (formulas \eqref{sib})
we have seen that
$$
B^{n_k} (e_{v_{i,j}})  = \sum_{l=0}^{i-1} C_{n_k}^l e_{v_{i-l,j+l -n_k}},
$$
and so \eqref{ung} implies also that $B^{n_k} f \to 0$ on finite sequences. 
It remains to apply Hypercyclicity Criterion.

To prove necessity of \eqref{ung}
consider a finite set $F = \{(v_{i, j}), 1\le  i \leq m, -K \le j \leq K\}$. If
$B$ is hypercyclic, then it is  topologically transitive and so 
for any $\vep \in (0,1/2)$ there exist an arbitrarily large $n$ and $f\in X$ such that
$$
\big\| f - \sum\limits_{j=-K}^{K} e_{v_{m,j}}\big\|_X < \vep \min(1,  \min\limits_{v \in F} |\mu_v|),
$$
and 
\begin{equation}
\label{tve}
\big\|B^n f - \sum\limits_{j=-K}^{K} e_{v_{m,j}}\big\|_X < \vep \min\limits_{v \in F} |\mu_v|.
\end{equation}
We can also assume that $v_{i, j+n}, v_{i, j-n} \notin F$  for any $v_{i, j}  \in F$.
Repeating the arguments from the proof of Theorem \ref{t1} we show that
if $\vep$ is sufficiently small, then $|f (v_{i, j+n})| \ge C n^{m-i}$ for $1\le i\le m$, $-K \le j\le K$, where 
$C>0$ is some constant depending on $m$ only. 
Since $ \big\| f - \sum_{j=-K}^{K} e_{v_{m,j}}\big\|_X < \vep$ and 
$v_{i, j+n} \notin F$  for any $v_{i, j}  \in F$ we conclude that
$|\mu_{v_{i, j+n}} f (v_{i, j+n})| <\vep$ for $v_{i, j}  \in F$. Thus, 
$$
n^{m-i} |\mu_{v_{i, j+n}}| \le C\vep, \qquad 1\le i\le m, \ -K\le j\le K,
$$
for some other constant $C>0$ depending on $m$ only. 

Since,  for $-K \le j\le K$, 
$$
 |f(v_{m,j}) -1| \cdot |\mu_{v_{m,j}}| \le 
\big \|f - \sum\limits_{l=-K}^{K} e_{v_{m,l}}\big\|_X < \vep \min(1,  \min\limits_{v \in F} |\mu_v|),
$$
we conclude that for $-K \le j\le K$ we have
$|f(v_{m,j})| \ge 1/2$ whereas $|f(v_{i,j})|<\vep$, $1\le i\le m-1$.
Now we write $f= \sum_{i=1}^m\sum_{j\in \Z} f(v_{i,j}) e_{v_{i,j}}$.
Then
$$
B^n f =   \sum_{i=1}^m\sum_{j\in \Z} f(v_{i,j}) \sum_{l=0}^{i-1} C_{n}^l e_{v_{i-l,j+l -n}} = 
\sum_{k=1}^m \sum_{j\in \Z}  \Big[ \sum_{i=k}^{m} C_n^{i-k} f(v_{i, j-i+k})\Big] e_{v_{k,j-n}}.
$$
For sufficiently large $n$ one has 
$|\sum_{i=k}^{m} C_n^{i-k} f(v_{i, j-i+k})| \ge C n^{m-k}$, $1\le k\le m$, $-K+m-1 \le j\le K$.
Since  $ v_{k,j-n} \notin F$, $-K\le j\le K$, we conclude  as above that 
$n^{m-k} |\mu_{v_{k, j-n}}| \le C\vep$, $1\le k\le m$, $-K+m-1 \le j\le K$. 

Finally, since $\vep>0 $ and $K\in\N$ were arbitrary, we can find a sequence $n_k\to \infty$
satisfying \eqref{ung}.
\end{proof}
\bigskip


\section{Proof of Theorem \ref{nxn}}

We give two slightly different proofs of Statement 1 in Theorem \ref{nxn}. In the first of them we use 
the  following simple sufficient condition of mixing known as  the Godefroy--Shapiro 
Criterion (see \cite{gosh}, \cite[Corollary 1.10]{bm} or \cite[Theorem 3.1]{gp}), while the second proof 
is based on the Hypercyclicity Criterion. 
\medskip
\\
{\bf Theorem (Godefroy--Shapiro Criterion).} 
{\it If, for a bounded linear operator $T$ on a separable Banach 
space $X$, both $\cup_{|\lambda|<1}{\rm Ker}\,(T-\lambda I)$ 
and $\cup_{|\lambda|>1}{\rm Ker}\,(T-\lambda I)$  span a dense subspace in $X$, then $T$ is mixing. }
\medskip
\\
\begin{proof}[Proof of  Theorem \ref{nxn}]
Throughout the proof we will consider the shift on the lattice $\N_0 \times \N_0$ in place of $\N \times \N$. 
This will slightly simplify the formulas.
\medskip
\\
{\it First proof of Statement 1.}
 For $r\ge 1$, $s\in\co$, consider the function $f_{r,s}$ 
defined on $\N_0 \times \N_0$ by
$$
f_{r,s} (v_{i, j}) = r^{i+2j} s^{i+j},
$$
that is,
$$
f_{r,s} = 
\begin{pmatrix}
\dots & \dots & \dots & \dots &  \dots & \dots \\
r^8 s^4 & \dots & \dots & \dots & \dots &  \dots  \\
r^6s^3 &  r^7 s^4 & \dots & \dots & \dots &  \dots  \\
r^4 s^2 &  r^5s^3 &  r^6 s^4 & \dots &  \dots  &  \dots \\
r^2s &  r^3s^2 &  r^4s^3 & r^5 s^4 & \dots & \dots \\
1 & rs & r^2s^2 & r^3s^3 & r^4 s^4 & \dots 
\end{pmatrix}.
$$
Since $\limsup_{i+j \to\infty} |\mu_{v_{i, j}}|^{1/(i+j)} =q <2$ (note that if $B$ is bounded then $q>0$), it is clear that 
$f_{r,s} \in X$ whenever  $qr^2|s| <1$. On the other hand
$$
B f_{r,s} = s(r^2+r)f_{r,s}.
$$
Thus, the set of eigenvalues contains the disc $\big\{|\lambda| < \frac{r^2+r}{qr^2} = \frac{1}{q}\big(1+\frac{1}{r}\big)\big\}$
for any $r\ge 1$.
Since $q<2$, its radius is greater than 1 when $r$ is sifficiently close to 1. More presicely, choose $\delta\in (0,1/8)$ so small that 
$|s|(r^2+r)>1$ for any $r\in (1, 1+\delta)$ and $\frac{1}{qr^2} -\delta <|s| < \frac{1}{qr^2}$. Put
$$
\begin{aligned}
U_1 & =\Big\{(r,s): \ r\in (1, 1+\delta), \, \frac{1}{qr^2} -\delta <|s| < \frac{1}{qr^2}\Big\}, \\
U_2 & =\{(r,s): \ r\in (1, 1+\delta),\,|s| <\delta\}.
\end{aligned}
$$
To apply the Godefroy--Shapiro criterion, we show that 
the families $\{f_{r,s}:\ (r,s) \in U_1\}$ and $\{f_{r,s}:\ (r,s) \in U_2\}$ are complete in $X$. 

Indeed, let $g=\{g(v_{i, j})\} = \{g_{i,j}\} \in X^*$ be a sequence in the annulator of $\{f_{r,s}:\ (r,s) \in U_1\}$.
Then we have, for any $r\in (1, 1+\delta)$ and $\frac{1}{qr^2} -\delta <|s| < \frac{1}{qr^2}$,
$$
\sum_{n=0}^\infty \sum_{k=0}^n f(v_{k, n-k})\bar g_{k,n-k} = 
\sum_{n=0}^\infty s^n r^n \sum_{k=0}^n  \bar g_{k,n-k} r^{n-k} =0.
$$
Considering this series as a power series with respect to the variable $s$ we conclude that for any $n\in \mathbb{N}_0$
$$
\sum_{k=0}^n  \bar g_{k,n-k} r^{n-k} =0,   \qquad r\in (1, 1+\delta),
$$
whence, obviously, $g_{k,n-k} =0$ for any $n\ge 0, 0\le k\le n$. Completeness of $\{f_{r,s}:\ (r,s) \in U_2\}$ is analogous.
\medskip
\\
{\it Second proof of Statement 1.}
 For $k\geq 0$ we let
    $$
    X_k=\text{span}\{e_{v_{i+j}}: i+j=k\}
    $$
    be the $(k+1)$-dimensional space of sequences with non-zero entries on the diagonal $\{(i,j): i+j=k\}$. The vector subspace
    $$
    Y=\bigcup_{k=0}^{\infty}X_k
    $$
    is dense in the space. Let $\{a_i\}_{i=0}^{\infty}$ be a sequence of pairwise distinct, non-zero real scalars. For $k\geq 0$ we set
    $$
    f_i^k=\sum_{j=0}^ka_i^je_{k-j,j}, \quad 0\leq i\leq k.
    $$
    Notice that the set $\{f_i^k\}_{i=0}^k$ forms a base for $X_k$ since the determinant of the coefficients is a non-zero Vandermonde determinant. For $n\geq 0$, define $R_n$ on $\{f_i^k\}_{i=0}^k$ by
    $$
    R_nf_i^k=\frac{1}{(1+a_i)^n}\sum_{j=0}^{k+n}a_i^je_{k+n-j,j}.
    $$
    Note that 
    $$
    B^n e_{v_{i,j}} = \sum_{l=\max(0, n-j)}^{\min(n, i)} C_n^l e_{v_{i-l, j-(n-l)}}.
    $$
    Therefore, by a direct computation, 
    $$
    B^n R_n f_i^k= \frac{1}{(1+a_i)^n} \sum_{l=0}^n C_n^l \sum_{j=n-l}^{n+k-l} a_i^j e_{v_{k+n-j-l, j-(n-l)}} = 
   \frac{1}{(1+a_i)^n} \sum_{l=0}^n C_n^l \sum_{s=0}^{k} a_i^{n-l+s} e_{v_{k-s,s}}  = f_i^k.
    $$
    Also, $B^nf_i^k=0$, for each $n>k$. Extend $R_n$ linearly to get first 
an operator from $X_k$ to $X_{k+n}$ and then to get an operator on $Y$. It remains to show that $R_nf_i^k \rightarrow 0$. 
    
    Let $\limsup_{i+j\to \infty} |\mu_{v_{i,j}}|^{1/(i+j)}=q<2$ and  choose $a_i\in (q-1,1)$, $i\geq 0$.
    If $X = \ell^p(V_\infty,\mu)$ we get that
    $$
    \|R_n f_i^k\|=\frac{1}{(1+a_i)^n}\left(\sum_{j=0}^{k+1}a_i^{jp}|\mu_{k+n-j,j}|^p \right)^{1/p}
     \leq \frac{Cq^{k+n}}{(1+a_i)^n(1-a_i^p)^{1/p}}\rightarrow 0.
    $$
The Hypercyclicity Criterion now applies for the full sequence $\{n\}$, and ensures that $B$ is mixing. The case $X = c_0(V_\infty,\mu)$ is
analogous. 
\medskip
\\
{\it Proof of Statement 2.} Assume that $B$ is hypercyclic. Then for any $\vep\in(0,1/2)$  
and $N\in \N$ there exists $f\in X$ and $n>N$ such that $\|f\|_X < \vep$ and
$$
\|B^n f - e_{v_{0,0}}\| <\vep |\mu_{v_{0,0}}|.
$$
Since $(B^nf)  (v_{0,0}) = \sum_{k=0}^{n} C_n^k f(v_{k, n-k})$, we have
$$
\bigg|\sum_{k=0}^{n} C_n^k f(v_{k, n-k})\bigg|>1/2.
$$
Let $k_0$ be such that $|f(v_{k_0, n-k_0})| = \max_{0\le k\le n} |f(v_{k, n-k})|$. 
Using the fact that $\sum_{k=0}^{n} C_n^k =2^n$, we conclude that $|f(v_{k_0, n-k_0})|\ge 2^{-n-1}$. 
It follows from \eqref{who} that $\|f\|_X \ge c/2$, a contradiction with $\|f\|_X < \vep$ if $\vep$ is sufficiently small.
\end{proof}

We conclude this section with a necessary condition for hypercyclicity of the backward shift on graphs, which also proves
Statement 2 of Theorem \ref{nxn}. For simplicity we formulate it for $1<p<\infty$. The cases of 
$\ell^1(V_\infty,\mu)$ and $c_0(V_\infty,\mu)$ require an obvious modification. 

\begin{proposition}
\label{nec}
Let $B$ be bounded on $X= \ell^p(V_\infty,\mu)$, $1 < p <\infty$, and let $1/p+1/p'=1$. If $B$ is hypercyclic on $X$,
then there exists an increasing sequence $(n_k)$  of positive integers such that 
for any $i,j\in \N$,
$$
\sum_{l=0}^{n_k} \frac{(C_{n_k}^l)^{p'}}{ |\mu_{v_{i+l, j+n_k-l}}|^{p'} } \to\infty, \qquad k \to \infty.
$$
\end{proposition}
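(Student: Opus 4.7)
The plan is to exploit topological transitivity of the hypercyclic operator $B$ to approximate $e_{v_{i,j}}$ by $B^n f$ for some small $f$, write out the coefficient $(B^n f)(v_{i,j})$ explicitly (this computation already appeared in the proof of Statement 2 of Theorem \ref{nxn}), and then dualize the resulting lower bound via H\"older's inequality to extract a lower bound on the sum of $(C_n^l)^{p'}/|\mu_{v_{i+l,\,j+n-l}}|^{p'}$.

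More concretely, fix $i,j\in\N$ and $\eps\in(0,1/2)$. Since $B$ is hypercyclic, it is topologically transitive, so for any $N\in\N$ we can find $n>N$ and $f\in X$ with $\|f\|_X<\eps$ and
$$
\bignorm{B^n f - e_{v_{i,j}}}_X < \eps \, |\mu_{v_{i,j}}|.
$$
Evaluating the coordinate at $v_{i,j}$ and using
$$
(B^n f)(v_{i,j}) = \sum_{l=0}^{n} C_n^l\, f(v_{i+l,\,j+n-l})
$$
(as established in the proof of Statement 2 of Theorem \ref{nxn}) yields the lower bound
$$
\biggabs{\sum_{l=0}^{n} C_n^l\, f(v_{i+l,\,j+n-l})} \ge 1 - \eps \ge \tfrac12.
$$
Now I would apply H\"older's inequality with the conjugate pair $(p,p')$, factoring $C_n^l = \bigl(C_n^l/\mu_{v_{i+l,\,j+n-l}}\bigr)\cdot \mu_{v_{i+l,\,j+n-l}}$, which gives
$$
\tfrac12 \le \biggl(\sum_{l=0}^{n} \frac{(C_n^l)^{p'}}{|\mu_{v_{i+l,\,j+n-l}}|^{p'}}\biggr)^{1/p'} \biggl(\sum_{l=0}^{n} |\mu_{v_{i+l,\,j+n-l}}\, f(v_{i+l,\,j+n-l})|^p\biggr)^{1/p} \le \eps \biggl(\sum_{l=0}^{n} \frac{(C_n^l)^{p'}}{|\mu_{v_{i+l,\,j+n-l}}|^{p'}}\biggr)^{1/p'}.
$$
Hence $\sum_{l=0}^{n} (C_n^l)^{p'}/|\mu_{v_{i+l,\,j+n-l}}|^{p'} \ge (2\eps)^{-p'}$ for arbitrarily large $n$.

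To produce a single increasing sequence $(n_k)$ that works simultaneously for \emph{every} pair $(i,j)\in\N^2$, I would enumerate $\N\times\N = \{(i_1,j_1),(i_2,j_2),\dots\}$ and perform a standard diagonal argument: at stage $k$, apply the above with $\eps=1/k$ for each of the finitely many pairs $(i_r,j_r)$, $r\le k$, and pick a common sufficiently large $n_k > n_{k-1}$ (this is possible because each of the finitely many transitivity conditions can be satisfied for arbitrarily large $n$). The resulting $(n_k)$ satisfies the required conclusion, because for any fixed $(i,j)=(i_r,j_r)$ the sum exceeds $(2/k)^{-p'}\to\infty$ for $k\ge r$.

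The only genuinely nontrivial step is the H\"older manoeuvre; the diagonalization and the use of transitivity are routine. The argument is written for $1<p<\infty$; for $p=1$ one would replace H\"older by the bound $\bigabs{\sum_l C_n^l f(\cdot)} \le \|f\|_{\ell^1(\mu)}\cdot \max_l C_n^l/|\mu_{v_{i+l,\,j+n-l}}|$, and for $c_0(V_\infty,\mu)$ one would similarly use the $\ell^1$ dual, obtaining the analogous $\max$ (respectively $\ell^1$ sum) formulation announced in the paper.
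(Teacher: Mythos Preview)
Your H\"older step and the coordinate formula for $(B^n f)(v_{i,j})$ are correct and coincide with the paper's computation. The problem is in the diagonalization. You assert that a common $n_k>n_{k-1}$ can be chosen because ``each of the finitely many transitivity conditions can be satisfied for arbitrarily large $n$.'' But hypercyclicity only guarantees that each individual return set
\[
N_r=\bigl\{n:\ \exists f_r,\ \|f_r\|_X<\eps,\ \|B^n f_r-e_{v_{i_r,j_r}}\|_X<\eps\,|\mu_{v_{i_r,j_r}}|\bigr\}
\]
is infinite; it does \emph{not} guarantee that the finite intersection $\bigcap_{r\le k} N_r$ is nonempty. Asking for a single $n$ that works for several independent transitivity conditions is essentially the weak mixing property, which you have not assumed (and which, in the generality of Proposition~\ref{nec}, you cannot assume).

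The fix is immediate and is precisely what the paper does: instead of approximating each $e_{v_{i_r,j_r}}$ separately, approximate the single target
\[
g=\sum_{1\le i,j\le K} e_{v_{i,j}}.
\]
Hypercyclicity (one application of transitivity) then yields, for any $\eps>0$ and any $N$, some $n>N$ and a single $f$ with $\|f\|_X<\eps$ and $\|B^n f-g\|_X<\eps\min_{1\le i,j\le K}|\mu_{v_{i,j}}|$. Evaluating at each coordinate $v_{i,j}$ with $1\le i,j\le K$ gives $|(B^n f)(v_{i,j})-1|<\eps$ simultaneously, and your H\"older inequality finishes exactly as you wrote. Letting $K\to\infty$ and $\eps\to 0$ along a sequence produces the required $(n_k)$ with no diagonal argument needed.
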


\begin{proof}
The proof is analogous to \cite[Lemma 4.2]{grpap}. If $B$ is hypercyclic, then for any $\vep \in(0, 1/2)$ and  $K, N\in\N$ 
there exist $f\in X$ and $n>N$ such that 
$$
\|f\|_X<\vep, \qquad \big\|B^n f - \sum\limits_{1\le i, j\le K} e_{v_{i,j}}\big\|_X < \vep \min_{1\leq i,j\leq K}|\mu_{v_{i,j}}|.
$$
Since $(B^nf)  (v_{i,j}) = \sum_{l=0}^{n} C_n^l f(v_{i+l, j+n-l})$, we have for $i,j\le K$ 
$$
1/2 \le \bigg| \sum_{l=0}^n C_n^l f(v_{i +l ,j+n-l}) \bigg| \le \|f\|_X 
\bigg( \sum_{l=0}^{n} \frac{(C_{n}^l)^{p'}}{ |\mu_{v_{i+l, j+n-l}}|^{p'} } \bigg)^{1/{p'}} \le \vep
\bigg( \sum_{l=0}^{n} \frac{(C_{n}^l)^{p'}}{ |\mu_{v_{i+l, j+n-l}}|^{p'} } \bigg)^{1/{p'}}.
$$
Repeating the procedure for $K_k\to \infty$ and $\vep_k\to 0$ we find a sequence $n_k$ as required.
\end{proof}

In the case of trees considered in \cite{grpap} this natural condition turns out be also sufficient. 
We do not know whether it is the case for the lattice $\N\times \N$.
\bigskip


\section{Proof of Theorem \ref{onecoord}}

We start with the following simple but useful observation. 

\begin{proposition}
\label{subg}
Let  $G=(V,E)$ be a subgraph of a directed graph $\Tilde G=(\Tilde V, \Tilde E)$ such that
if $v\in V$ and $(v,u) \in \Tilde E$, then $u\in V$ and $(v,u) \in E$, i.e., $G$ includes all edges which start in $V$.
Let $\mu$ be a weight on $\Tilde G$ such that the backward shift $\Tilde B$ is bounded on 
$\ell^p(\Tilde V,\mu)$, $1 \le p <\infty$, or $c_0(\Tilde V,\mu)$. Then the backward shift $B$ on
$\ell^p(V,\mu)$ or $c_0(V,\mu)$ inherits all dynamical properties 
\textup(mixing, weak mixing, hypercyclicity\textup) of $\Tilde B$ on $\ell^p(\Tilde V,\mu)$ or, respectively, $c_0(\Tilde V,\mu)$.
\end{proposition}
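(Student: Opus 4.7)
The plan is to realize $B$ on $\ell^p(V,\mu)$ (or on $c_0(V,\mu)$) as a quasi-factor of $\Tilde B$ via the canonical restriction map, after which hypercyclicity, weak mixing and mixing transfer automatically from $\Tilde B$ to $B$. Concretely, I would introduce
$$
\pi : \ell^p(\Tilde V, \mu) \to \ell^p(V, \mu), \qquad (\pi \tilde f)(v) := \tilde f(v), \quad v\in V,
$$
and observe that $\pi$ is linear with $\|\pi\|\le 1$ and surjective: every $f\in \ell^p(V,\mu)$ is the image under $\pi$ of its extension by zero to $\Tilde V$, and that extension belongs to $\ell^p(\Tilde V,\mu)$ with the same norm. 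The same construction and observations work verbatim for $c_0$.

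The key algebraic step is the intertwining relation $\pi \Tilde B = B\pi$. The hypothesis on $V$ is exactly that for every $v\in V$ one has $Chi_{\Tilde G}(v)=Chi_G(v) \subset V$. Hence for $\tilde f \in \ell^p(\Tilde V,\mu)$ and $v\in V$,
$$
(\pi \Tilde B \tilde f)(v) = \sum_{u\in Chi_{\Tilde G}(v)} \tilde f(u) = \sum_{u\in Chi_G(v)} (\pi \tilde f)(u) = (B \pi \tilde f)(v),
$$
so $\pi \Tilde B^n = B^n \pi$ for every $n\ge 0$.

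From here the transfer of dynamical properties follows by standard factor-map arguments. If $\tilde f$ is hypercyclic for $\Tilde B$ and $g\in \ell^p(V,\mu)$ is arbitrary, let $\tilde g$ be the zero extension of $g$; choosing $n_k$ with $\Tilde B^{n_k}\tilde f \to \tilde g$ and applying the continuous map $\pi$ yields $B^{n_k}(\pi\tilde f) \to g$, so $\pi\tilde f$ is hypercyclic for $B$. Weak mixing follows by applying the same argument to $\Tilde B \oplus \Tilde B$ together with $\pi \oplus \pi$, which intertwines in the obvious way. For mixing, given nonempty open sets $U, W\subset \ell^p(V,\mu)$, the preimages $\pi^{-1}(U)$ and $\pi^{-1}(W)$ are nonempty and open in $\ell^p(\Tilde V,\mu)$; mixing of $\Tilde B$ provides $N$ with $\Tilde B^n(\pi^{-1}(U))\cap \pi^{-1}(W)\ne\emptyset$ for $n\ge N$, and applying $\pi$ (using surjectivity, namely $\pi(\pi^{-1}(W))=W$, and $\pi(\Tilde B^n(\pi^{-1}(U)))=B^n(U)$) gives $B^n(U)\cap W\ne\emptyset$ for all such $n$.

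There is really no serious obstacle here: once one notices that the forward-closedness assumption on $V$ is precisely what is needed for the restriction map to intertwine the two backward shifts, the statement collapses to the general principle that a continuous surjective intertwiner preserves hypercyclicity, weak mixing and mixing. The only elementary verifications are that $\pi$ is bounded and surjective, both of which are immediate because the weight $\mu$ is shared between $\Tilde G$ and $G$ and because extension by zero is an isometry into $\ell^p(\Tilde V,\mu)$ (respectively, $c_0(\Tilde V,\mu)$).
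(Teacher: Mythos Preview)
Your proof is correct and follows essentially the same approach as the paper: both introduce the restriction map $\pi$ (called $R$ in the paper), observe it is a norm-one surjection satisfying $\pi\Tilde B = B\pi$ thanks to the forward-closedness hypothesis, and conclude via quasi-conjugacy. The only difference is level of detail: the paper simply invokes the general principle that dynamical properties pass to quasi-factors, whereas you spell out the transfer of hypercyclicity, weak mixing and mixing explicitly.
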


\begin{proof}
Consider the restriction operator $R:\ell^p(\Tilde{V},\mu)\rightarrow \ell^p(V,\mu)$, $1\leq p<\infty$, or 
$R:c_0(\Tilde{V},\mu)\rightarrow c_0(V,\mu)$, $R f=f|_{V}$. 
It is clear that $R$ is a surjective operator of norm one, and, by the properties of $G$,
$$
R\circ \Tilde{B}=B\circ R.
$$
This means that $B$ is quasi-conjugate to $\Tilde{B}$ and so  inherits all dynamical properties  of $\Tilde B$.
\end{proof}

In the following proof we will use recent results from \cite{mp} about generalized shifts. 
Given an operator $T$ on the Banach space $X$, the generalized shift $B_T$ is defined on $\prod_{n\in \N} X$ or 
$\prod_{n\in \Z} X$ as
$$
B_T (x_k)_k = (T x_{k+1})_k.
$$
Thus, if $T=I$, then $B_T$ is a usual shift. 

Consider the following spaces:
$$
\ell^p(X, \Z) = \Big\{(x_k)_k \in X^{\Z} : \sum_k \|x_k\|_X^p<\infty\Big\}, 
$$
$$
c_0(X, \Z) = \Big\{(x_k)_k \in X^{\Z} : \lim_{|k|\to \infty} \|x_k\|_X =0\Big\}.
$$
It is shown in \cite[Corollary 2.8]{mp} that $B_T$ on $\ell^p(X, \Z)$, $1\le p <\infty$, or on 
$c_0(X, \Z)$ is hypercyclic if and only if it is weakly mixing and if and only if $T$ is weakly mixing on $X$,
while $B_T$ is mixing if and only if $T $ is mixing \cite[Corollary 3.2]{mp}. 

\begin{proof}[Proof of Theorem \ref{onecoord}]
It follows from Proposition \ref{subg} that it is sufficient to prove the statement for the graph 
$\Tilde{G}_{\infty}=(\Tilde{V}_{\infty},\Tilde{E}_{\infty})$ since
its subgraph $G_\infty$ satisfies the conditions of the proposition. 
Note that the backward shift $B$ on $G_{\infty}$ when considered as an operator on either
$\ell^p(V_{\infty},\mu)$, $1\leq p<\infty$, or $c_0(V_{\infty},\mu)$ is bounded if and only 
if the backward shift $\Tilde{B}$ on $\Tilde{G}_{\infty}$, when considered as an operator on any of 
$\ell^p(\Tilde{V}_{\infty},\mu)$, $1\leq p <\infty$, or $c_0(\Tilde{V}_{\infty},\mu)$ is bounded, which happens 
precisely when  $\sup_{i\in \mathbb{N}}\frac{|\mu_i|}{|\mu_{i+1}|}<\infty$. 

Now set for $k\in \mathbb{Z}$,
$$
D_k=\{v_{i,j}\in \Tilde{V}_{\infty}: i+j=k\}
$$
and notice that, due to the fact that $\mu$ depends only on the first coordinate, 
$\mu_{v_{i,j}} = \mu_i$, we can identify the spaces $\ell^p(D_k,\mu)$, 
$k\in \mathbb{Z}$, with $\ell^p(\mathbb{N},\mu)$, $1\leq p< \infty$, and similarly $c_0(D_k,\mu)$, $k\in \mathbb{Z}$, with $c_0(\mathbb{N},\mu)$ via the identification $v_{i,k-i} \leftrightarrow i$. This allows us to further identify for $1\leq p<\infty$
$$
\ell^p(\Tilde{V}_{\infty})\cong \ell^p(\ell^p(\mathbb{N},\mu),\mathbb{Z})=\{(f_k)_{k \in \Z}\in \ell^p(\mathbb{N},\mu)^{\mathbb{Z}}: \sum_{k=-\infty}^{\infty}\|f_k\|_{\ell^p(\mathbb{N},\mu)}^p<\infty\}
$$
and 
$$
c_0(\Tilde{V}_{\infty})\cong c_0(c_0(\mathbb{N},\mu),\mathbb{Z})=\{(f_k)_{k \in \Z}\in c_0(\mathbb{N},\mu)^{\mathbb{Z}}: \lim_{|k|\rightarrow \infty}\|f_k\|_{c_0(\mathbb{N},\mu)}\rightarrow 0\}
$$
by $(f(v_{i,j}))_{i\in \mathbb{N}, j\in \mathbb{Z}}\mapsto (f_k)_{k\in \Z}  = ((f(v_{i,k-i}))_{i\in \mathbb{N}})_{k\in \mathbb{Z}}$. 

Let $B_0$ be the unweighted unilateral backward shift on either $\ell^p(\mathbb{N},\mu)$, $1\leq p<\infty$, or $c_0(\mathbb{N},\mu)$.
Under the above identifications, we notice that the backward shift $\Tilde{B}$ 
can be viewed as the generalized shift $B_{I+B_0}$ on 
$\ell^p(\ell^p(\mathbb{N},\mu),\mathbb{Z})$ or $c_0(c_0(\mathbb{N},\mu),\mathbb{Z})$, defined by
$$
B_{I+B_0}(f_k)_{k\in \Z}=((I+B_0)(f_{k+1}))_{k \in \Z}.
$$
Indeed, 
$$
(\Tilde Bf)(v_{i, k-i}) =  f(v_{i, k+1-i}) + f(v_{i+1, k-i}) = f(v_{i, k+1-i}) + f(v_{i+1, k+1-(i+1)}).
$$
Therefore, if we write $f_k = (f(v_{i,k-i}))_{i\in \mathbb{N}}$, then
$\Tilde B (f_k)_{k\in Z} = (f_{k+1} +B_0 f_{k+1})_{k\in \Z}$.
By \cite{mp}, the unilateral generalized shift $B_{I+B_0}$ is mixing if and only its symbol $I+B_0$ 
is mixing. Since by a theorem of Salas \cite{sal} (see also \cite[Theorem 8.2]{gp}), 
$I+B_0$ is always mixing on 
$\ell^p(\mathbb{N},\mu)$, $1\leq p<\infty$, or on $c_0(\mathbb{N},\mu)$ provided it is 
bounded, we conclude that $\Tilde{B}$ is mixing. 
\end{proof}

\begin{remark}
{\rm 
If we drop the assumption that $\mu$ depends on one coordinate only and if $\Tilde{\mu}$ is a weight on $\Tilde{V}_{\infty}$ that extends $\mu$ and makes $\Tilde{B}$ bounded, then we can still repeat the above argument and see
$\Tilde{B}$ as the bilateral generalized shift $B_{I+B_0}$ (which is bounded) but in this case the symbol 
$I+B_0$ on the $n$-th coordinate is an operator from $\ell^p(\mathbb{N},\Tilde{\mu}|_{D_{n+1}})$ to 
$\ell^p(\mathbb{N},\Tilde{\mu}|_{D_n})$. In this case Salas' result cannot be used to conclude that $I+B_0$ is mixing.

Also, if we consider the shift on the lattice $\Z\times \Z$ in the case when $\mu$ depends on one coordinate only,
we can identify it with $B_{I+B_0}$, where $B_0$ is the usual bilateral shift on $\ell^p(\mathbb{Z},\mu)$. However, in the bilateral
case it is not known whether $I+B_0$ is mixing or, at least, hypercyclic.}
\end{remark}
\bigskip


\section{Examples} 
\label{ex}

We start with an obvious example when a graph cannot carry a hypercyclic backward shift
even if we have a freedom in the choice of a measure. 

\begin{example}
{\rm Let $V = \{v_{1,1},v_{1,2}\} \cup \{v_{i,i-1},v_{i,i}, v_{i,i+1}, \ i\ge 2\}$. Assume that
$v_{i,i-1} \to v_{i,i}$, $v_{i-1,i} \to v_{i,i}$, $v_{i,i} \to v_{i+1,i}$, $v_{i,i} \to v_{i,i+1}$, 
and there are no other edges in $G$. Then it is clear that $(B^n f)(v_{i,i-1}) = (B^n f)(v_{i-1,i}) =  (B^{n-1} f)(v_{i,i}) $,
and so $B$ is not hypercyclic.

More generally, if $G$ is a graph with a vertex $v$ satisfying that $|Par(v)| > 1$
and $|Chi(Par(v))| = 1$ (i.e., $Chi(Par(v)) = \{v\}$), then the same argument applies and $B$ is not hypercyclic.
Here we denote by $|E|$ the number of elements in the set $E$.}
\end{example}

\begin{example}
{\rm Let $V = \{v_j\}_{j\ge 1} \cong \N$ and 
$E = \{(v_i, v_{i+1}), \ i\ge 1\} \cup\{(v_2, v_1)\}$. Thus, $V$ is the tree $\N$ with one added edge making a cycle.
We consider the ``Rolewicz operator'' $\alpha B$ (considered for the first time in  \cite{rol}), where $\alpha\in\co$, $|\alpha|>1$, 
on the unweighted space $\ell^p$. By a trivial computation, for a sequence $f=(f_j)_{j\ge 1}$,
we have
$$
B^{2n-1} f = \bigg( \sum_{j=1}^n f_{2j}, \sum_{j=1}^{n+1} f_{2j-1}, f_{2n+2}, f_{2n+3}, \ldots \bigg),
$$
$$
B^{2n} f = \bigg( \sum_{j=1}^{n+1} f_{2j-1}, \sum_{j=1}^{n+1} f_{2j}, f_{2n+3}, f_{2n+4}, \ldots \bigg).
$$
We show that $\alpha B$ is hypercyclic (and even mixing) in $\ell^p$ for $1 < p<\infty$. 
Denote by $Y$ the set of all finite sequences $(x_k)$ such that
$\sum\limits_{k \text{ -- odd}} x_{k} = \sum\limits_{k \text{ -- even}} x_{k} =0$. Clearly, $Y$ is dense in $\ell^p$ 
for  $1 < p<\infty$.  Now let $U$  be a non-empty subset of $\ell^p$ and let $W$ be a neighborhood of zero. 
If $x\in U\cap Y$, then $(aB)^n x = 0$ eventually, which means that the return set $N(U,W)
=\{n: (aB)^n(U) \cap W\ne\emptyset\}$ is cofinite. If we set
$$
y_n = \frac{1}{a^n} \sum_{k=1}^\infty x_k e_{k+n},
$$
then $y_n\to 0$ and $(aB)^n y_n = x$ for each $n\in \N$ which shows that also $N(W,U)$ is
cofinite. We conclude that $aB$ is mixing. }
\end{example}

\begin{remark}
{\rm
Note that any bounded operator $T$ acting on a Banach space $X$ with a Schauder base can be interpreted 
as a {\it weighted} backward shift on some Banach space of sequences indexed by some 
graph. Indeed, if $\{e_n\}_{n=1}^\infty$ is a Schauder basis and
$$
Te_n = \sum_{i=1}^\infty a_i^n e_i,
$$
then we define the graph
$G$ to have vertices $e_n$ and we assume that $e_i\to e_n$ if and only if $a_i^n \ne 0$.
We define the weight on the edge $(e_i, e_n)$ as $a_i^n$. Thus, for $x=\sum_n c_n e_n$ we have 
$Tx = \sum_i \Big(\sum_n a_i^n c_n\Big) e_i$, and so $T$ is a weighted backward shift on the coefficient space of the
basis $\{e_n\}$ equipped with its standard norm (see, e.g., \cite[Chapter 1]{young}). }
\end{remark}

\begin{example}
{\rm In the next example we again consider an unweighted backward shift. 
Let $V = \{v_j\}_{j\ge 1} \cong \N$ and assume that $v_j \to v_{j+1}$ and
$v_j \to v_{j+2}$, $j\ge 1$, and all edges are of this form.
Then we have $(Bf)(v_j) =  f(v_{j+1})+ f(v_{j+2})$ 
and so $B$ essentially coincides with $B_0(I+B_0)$ where $B_0$ is the usual backward shift on $\N$.

Assume that $\mu=\{\mu_n\}$ is a weight such that the backward shift $B_0$ is bounded on $X$
where $X = \ell^p(\N, \mu)$, $1\le p <\infty$, or $X = c_0(\N, \mu)$.
Let $q=\limsup_{n\to \infty} |\mu_n|^{1/n}$. Then for any $s \in \mathbb{C}$ with $|s| <q^{-1}$ we have
$f_s= \{s^n\}_{n\ge 1} \in X $ and $B_0(I+B_0)f_s = s(1+s) f_s$. 
If $q<\frac{1+\sqrt{5}}{2}$, then $q^{-1}(1+q^{-1})>1$, and we have an open set of eigenvalues
$\lambda$ of $B_0(I+B_0)$  with $|\lambda| <1$ and an open set of eigenvalues
$\lambda$ of $B_0(I+B_0)$  with $|\lambda| >1$. Corresponding families of eigenvectors are complete 
and so $B_0(I+B_0)$ is mixing by the Godefroy--Shapiro criterion.

As in Theorem \ref{nxn}, one easily shows that $q_0=\frac{1+\sqrt{5}}{2}$ is the critical value. 
Assume that there is $C>0$ such that $|\mu_n|\ge Cq_0^n.$ We show that in this case $B_0(I+B_0)$ 
is not hypercyclic. Indeed, if  $B_0(I+B_0)$ is hypercyclic, then
for any $\vep\in(0,1/2)$ and $N\in \N$ there exists $f = (f_k)_{k\in \N} \in X$ and $n>N$ such that $\|f\|_X < \vep$
and $\| (B_0(I+B_0))^n f - e_1 \|_X<\vep |\mu_1|$ where $e_1=(1, 0, 0, \dots)$.
On one hand, we have $|f_k| \le C\vep q_0^{-k}$. On the other hand, 
$$
\Big|\sum_{k=0}^{n} C_{n}^k f_{k+n} -1\Big| <\vep,
$$
a contradiction, when $\vep$ is sufficiently small.  This is in contrast with the case
of the operator $I+B_0$ which is weakly mixing on
$\ell^p(\N, \mu)$, $1\le p <\infty$, or $c_0(\N, \mu)$ whenever $B_0$ is bounded \cite[Theorem 8.2]{gp}.  }
\end{example}

\end{document}